\newcommand{\halmos}{\hfill{\ensuremath\blacksquare}}
\newtheorem{qstn}{Question}
\def\eref#1{$(\ref{#1})$}
\def\tref#1{Theorem~$\ref{#1}$}
\def\cref#1{Conjecture~$\ref{#1}$}
\def\qref#1{Question~$\ref{#1}$}
\renewcommand{\geq}{\geqslant}
\renewcommand{\leq}{\leqslant}
\renewcommand{\ge}{\geqslant}
\renewcommand{\le}{\leqslant}
\renewcommand\emptyset{\varnothing}
\newcommand{\eps}{\varepsilon}
\def\dfrac#1#2{\lower0.15ex\hbox{\large$\frac{#1}{#2}$}} 
\DeclareMathOperator{\LHC}{LHC}
\DeclareMathOperator{\NC}{NC}
\DeclareMathOperator{\NE}{NE}
\newcommand{\lhc}[3]{\ensuremath{\LHC_{#1}(#2,#3)}}
\newcommand{\ncom}[2]{\ensuremath{\NC_{#1}(#2)}}
\newcommand{\next}[2]{\ensuremath{\NE_{#1}(#2)}}
\title*{Extendibility of Latin Hypercuboids}
\author{Candida Bowtell\thanks{CB gratefully acknowledges support from Leverhulme Grant ECF--2023--393 and ERC Starting Grant 947978.}, 
Alice Devillers\thanks{AD was supported by Australian Research Council Discovery Project DP200100080.},
Andr\'e K\"undgen, Padraig \'{O} Cath\'{a}in\thanks{P\'{O}C acknowledges support from Technical University of the Shannon through a \textit{Learning Enhancement Project}, and colleagues in DCU for facilitating participation in the MATRIX workshop at which this work was initiated.} and Ian M. Wanless}
\institute{C. Bowtell \at School of Mathematics, University of Birmingham, Birmingham, B15 2TT, UK. \email{c.bowtell@bham.ac.uk}
\and A. Devillers\at Department of Mathematics and Statistics, University of Western Australia, Perth, Australia \email{alice.devillers@uwa.edu.au}
\and A.~K\"undgen\at Department of Mathematics, California State University San Marcos, San Marcos, CA 92096, United States. \email{akundgen@csusm.edu}
\and
P. \'{O} Cath\'{a}in\at Fiontar agus Scoil na Gaeilge, Dublin City University, Dublin 9, Ireland.
\email{p.ocathain@gmail.com}
\and
I.M.Wanless\at
School of Mathematics, Monash University, Clayton, Vic 3800, Australia.
\email{ian.wanless@monash.edu}
}
\begin{document}

\maketitle

\abstract{
A Latin hypercuboid of order $n$ is a $d$-dimensional matrix of
dimensions $n\times n\times\cdots\times n\times k$, with symbols from
a set of cardinality $n$ such that each symbol occurs at most once in
each axis-parallel line.  If $k=n$ the hypercuboid is a Latin
hypercube. The Latin hypercuboid is \emph{completable} if it is
contained in a Latin hypercube of the same order and dimension. It is
\emph{extendible} if it can have one extra layer added. In this note
we consider which Latin hypercuboids are completable/extendible. We also consider a generalisation that involves multidimensional arrays of sets that satisfy certain balance properties. The extendibility problem corresponds to choosing representatives from the sets in a way that is 
analogous to a choice of a Hall system of distinct representatives, but in higher dimensions. The completability problem corresponds to partitioning the sets into such SDRs. We provide a construction for such an array of sets that does not have the property analogous to completability. A related concept was introduced by H\"aggkvist under the name $(m,m,m)$-array. We generalise a construction of $(m,m,m)$-arrays credited to Pebody, but show that it cannot be used to build the arrays that we need.}

\section{Introduction}

Let \lhc{d}{n}{k} be the set of $d$-dimensional matrices of dimensions
$n\times n\times\cdots\times n\times k$, with symbols from the set
$[n]=\{1,2,\dots,n\}$ such that each symbol from $[n]$ occurs at most
once in each line that is parallel to one of the axes.  Such matrices
are called \emph{Latin hypercuboids} of order $n$ and dimension $d$.
If $k=n$ then the Latin hypercuboid is a Latin hypercube. A
hypercuboid in \lhc{d}{n}{k} is \emph{extendible} if it is contained
in some hypercuboid in \lhc{d}{n}{k+1} and \emph{completable} if it is
contained in some Latin hypercube of order $n$ and dimension $d$. For $H\in\lhc{d}{n}{k}$, we denote the entry in cell $[i_1,\dots,i_d]$ of $H$ by $H[i_1,\dots,i_d]$.
We will use
$$U_H(i_1,\dots,i_{d-1})=[n]\setminus\{H[i_1,\dots,i_{d-1},i]:i\in\{1,\dots,k\}\}$$
to denote the set of as yet unused symbols in a particular line of $H$. So $H$ is extendible if and only if we can
choose
$u(i_1,\dots,i_{d-1})\in U_H(i_1,\dots,i_{d-1})$ for each $(i_1,\dots,i_{d-1})$
in such a way that $u(i_1,\dots,i_{d-1})\ne u(j_1,\dots,j_{d-1})$ whenever
the vectors
$(i_1,\dots,i_{d-1})$ and $(j_1,\dots,j_{d-1})$ agree in all but one coordinate.

In the case when $d=2$, Latin hypercuboids and hypercubes are better
known as \emph{Latin rectangles} and \emph{Latin squares},
respectively.  When $d=3$, Latin hypercuboids and hypercubes are
sometimes called Latin cuboids and Latin cubes, respectively. The term
\emph{Latin parallelepiped} was also used in the early literature on
the subject but it does not capture the geometrical essence of the
objects (nobody would call a Latin rectangle a Latin parallelogram!).
A famous theorem due to Marshall Hall \cite{Hal45} says that every
Latin rectangle is completable (to a Latin square). In all higher
dimensions, there exist noncompletable Latin hypercuboids. This
immediately raises many natural questions, about how thin
nonextendible hypercuboids can be (i.e.~how small $k$ can be relative
to $n$), whether hypercuboids typically are or are not extendible and
so on. Such questions are the subject of this article. 

Let $\next{d}{n}$ be the smallest
$k$ such that there exists a hypercuboid in \lhc{d}{n}{k}
that is \emph{not} extendible.
Let $\ncom{d}{n}$ be the smallest
$k$ such that there exists a hypercuboid in \lhc{d}{n}{k}
that is \emph{not} completable. Of course, $\ncom{d}{n}\le\next{d}{n}$ for
all $d,n$. Note also the simple observation that every hypercuboid in \lhc{d}n{n-1} is completable (and thus extendible). 
It follows that the fullest
noncompletable hypercuboids that might exist are in \lhc{d}n{n-2}. So for each $d$ and $n$, either we have $\ncom{d}{n} \leq n-2$ or for every $k \in [n-1]$ and $H \in \lhc{d}n{k}$ we have that $H$ is completable. (Of course it then also follows that every such $H$ is extendible.) 

In this note, we survey the known upper and lower bounds for $\next{d}{n}$ and $\ncom{d}{n}$. We also generalise the extendibility and completability problems to objects that we call $(n^d,k)$-arrays. We give both negative and positive results regarding constructions for these arrays and relate them to an existing notion of an $(m,m,m)$-array introduced by H\"aggkvist \cite{Hag89}. 
We conclude by discussing a variety of open questions in the area.

\section{Arrays of sets}

Define an $(n^d,k)$-array to be a $d$-dimensional array with the following properties:
\begin{itemize}
\item the dimensions of the array are $n\times n\times\cdots\times n$,
\item each entry of the array is a subset of $[n]$ of cardinality exactly $k$, 
\item every number in $[n]$ occurs exactly $k$ times within the entries along any axis-parallel line within the array.
\end{itemize}

It is immediate from this definition that the array $U_H$ of unused symbols is an $(n^{d-1},n-k)$-array for any $H\in\lhc dnk$. We say that an  $(n^{d-1},n-k)$-array $A$ is \emph{realisable} if $A=U_H$ for some $H\in\lhc dnk$. However, there are values of $d,n,k$ for which not all $(n^{d-1},n-k)$-arrays are realisable. To see this, suppose that there is a noncompletable $H'\in\lhc dn{n-k}$ and consider the $(n^{d-1},n-k)$-array $(i_1,\dots,i_{d-1})\rightarrow[n]\setminus U_{H'}(i_1,\dots,i_{d-1})$. This array cannot be $U_H$ for any $H\in\lhc dnk$, otherwise $H'$ would be completable.

To mirror the properties of Latin hypercuboids that we are interested in, we say that an $(n^d,k)$-array contains a \emph{layer} if it is possible to replace each set in the array by a representative of that set, in such a way that the result is in $\lhc dn1$. We say that the array is \emph{layerable} if it can be decomposed into $k$ layers. Note that $U_H$ contains a layer if and only if $H$ is extendible and $U_H$ is layerable if and only if $H$ is completable. It seems of interest to study these properties for $(n^d,k)$-arrays without needing to worry about whether the array is realisable. We take a modest first step in that direction with \tref{t:nonlayerable} below.

There is some relevant literature that generalises $(n^2,k)$-arrays by replacing ``exactly" with ``at most" both times it occurs in our definition.
H\"aggkvist~\cite{Hag89} defined an object, called an $(m,m,m)$-array, to be an $n\times n$
array of sets of size at most $m$, with the property that each number
in $[n]$ occurs at most $m$ times among the sets in any row or column
of the array. 
Such an array $A$ is said to be \emph{avoidable} if
there exists an $n\times n$ Latin square $L$ such that no entry of $L$
is contained within the set in the corresponding cell of $A$. 
The sets $[n]\setminus U_H(i,j)$ of ``used'' symbols in
$H\in\lhc3n{m}$ provide an $(m,m,m)$-array $M_H$. Any Latin square avoiding
$M_H$ can be used to extend $H$, and is a layer in the $(n^2,n-m)$-array formed by taking the complement of each set in $M_H$. H\"aggkvist \cite{Hag89} made this conjecture:

\begin{conjecture}\label{cj:Hag}
  There exists a constant $\gamma>0$ such that if $m<\gamma n$ then
  every $(m,m,m)$-array is avoidable.
\end{conjecture}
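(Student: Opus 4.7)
The plan is to recast Conjecture~\ref{cj:Hag} as a near-perfect matching problem in a regular tripartite $3$-uniform hypergraph and attack it using the semirandom (R\"odl nibble) method augmented by an absorber. Associate to the $(m,m,m)$-array $A$ the hypergraph $\mathcal{H}$ on vertex classes $R$, $C$, $S$ (rows, columns, symbols, each of cardinality $n$), with a hyperedge for every triple $(i,j,s)$ such that $s\notin A(i,j)$. A perfect matching in $\mathcal{H}$ is precisely a Latin square avoiding $A$. The row and column balance conditions in H\"aggkvist's definition force $\mathcal{H}$ to have minimum degree at least $(1-\gamma)n$ at every vertex and pairwise codegrees at most $1$, an ideal setting for hypergraph matching machinery.

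First I would reserve a small random subhypergraph $\mathcal{A}$ as an absorber, chosen so that any small deficit surviving the main argument can be completed into a perfect avoiding matching by rearranging edges inside $\mathcal{A}$. On the complement I would run the nibble: iteratively pick a random partial matching, delete the matched triples, and verify using standard martingale concentration that the residual degree and codegree conditions persist. For $m<\gamma n$ with $\gamma$ sufficiently small this should yield an avoiding partial Latin square occupying all but an $o(1)$ fraction of cells. Finally, the absorber is invoked to finish. An attractive alternative, noted in the excerpt via the complementary viewpoint, is to work in the $(n^2,n-m)$-array formed by taking set-complements and look for a layer directly, since the ``allowed symbols'' array has every row and column symbol-count at least $(1-\gamma)n$.

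The hard part will be the absorber. Standard Latin-square absorbers are built from intercalates and short trades, but here every local switch must continue to avoid the adversarially placed forbidden sets in $A$. Showing that a random absorber is simultaneously robust against every possible leftover pattern produced by the nibble, \emph{and} respects the forbiddances in $A$, is precisely where the smallness of $\gamma$ is forced. If the absorbing step resists direct attack, a fallback is to analyse a uniformly random Latin square $L$ directly via recent quasirandomness results: the expected number of conflicts $\bigl|\{(i,j):L(i,j)\in A(i,j)\}\bigr|$ is at most $mn$, and one would then combine a concentration estimate with a carefully controlled sequence of $2\times 2$ switches that remove a conflicted cell without creating a new one, using the slack provided by $\gamma$ being small to guarantee that a safe swap always exists.
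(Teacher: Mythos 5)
This statement is a \emph{conjecture} in the paper; the authors give no proof of it themselves, but they record that it has been established in the literature (Andr\'en's thesis \cite{And10} for even $n$, Andr\'en \cite{And12} for $n=2^k$, Andr\'en--Casselgren--\"Ohman \cite{ACO13} for odd $n$, with \cite{CMP19,CP20} giving a four-dimensional strengthening). So there is no ``paper's own proof'' to compare against, and your proposal must be judged as a standalone argument --- and as such it is a research plan, not a proof.

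Two concrete problems. First, your hypergraph formulation is wrong as stated: with vertex classes $R$, $C$, $S$ each of size $n$, a perfect matching consists of only $n$ pairwise disjoint triples, which is a single transversal-type layer (one cell per row, per column, per symbol), not a Latin square. The standard correspondence takes the $3n^2$ \emph{pairs} $(i,j)$, $(i,s)$, $(j,s)$ as vertices and the triples $\{(i,j),(i,s),(j,s)\}$ with $s\notin A(i,j)$ as edges; only then is a perfect matching a Latin square avoiding $A$, and only then are the codegrees at most $1$ (in your formulation the codegree of a row--column pair is $n-|A(i,j)|\ge n-m$, not $1$). Second, and more seriously, everything you label ``the hard part'' is exactly the entire content of the theorem. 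The nibble delivers a near-perfect avoiding partial Latin square essentially for free from the degree/codegree conditions; the difficulty is converting ``all but $o(n^2)$ cells'' into ``all cells'' while continuing to respect the adversarial forbidden sets. You do not construct the absorber, nor verify that intercalate/trade switches can always be performed avoidingly, and the published proofs show this step requires long and delicate case analysis (indeed the odd case resisted solution for years after the even case). Your fallback --- start from a Latin square and remove conflicts by controlled $2\times2$ switches --- is in spirit close to what Andr\'en, Casselgren and \"Ohman actually do, but the claim that ``a safe swap always exists'' for small $\gamma$ is precisely the assertion that needs proof and is nowhere justified. As it stands the proposal identifies a plausible strategy but proves nothing.
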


If this conjecture is true, then $\gamma\le1/3$, as shown by the next result, which generalises a construction attributed in \cite{CO06} to Pebody.

\begin{theorem}\label{t:unavoidable}
    Let $n=a+b+c$ and $m=\max(a,b,c)$ where $a,b,c$ are positive integers that are not all equal. Let $A=\{1,\dots,a\}$, $B=\{a+1,\dots,a+b\}$ and $C=\{a+b+1,\dots,n\}$. Define $M=(M_{i,j})$ to be an $(m,m,m)$-array in which 
    \begin{equation}\label{e:ABC}    
    M_{i,j}=\begin{cases}
        A&\text{if $i\in A$ and $j\in A$},\\
        B&\text{if $i\in B$ and $j\in B$},\\
        C&\text{if $i\in C$ and $j\in C$},\\
\emptyset&\text{otherwise},
    \end{cases}
    \end{equation}
   as illustrated in the following diagram:
\[
    \begin{array}{|c|c|c|}
    \hline
    A&&\\
    \hline
    &B&\\
    \hline
    &&C\\
    \hline
    \end{array}.
    \]
    Further suppose that $N=(N_{i,j})$ is an $(n^2,k)$-array for some $k\ge 1$.
    Then there exists some $i,j$ for which $M_{i,j}\cap N_{i,j}\neq\emptyset$.
\end{theorem}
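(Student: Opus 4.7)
The plan is to proceed by contradiction and double-count occurrences of each symbol against the block partition $[n] = A \sqcup B \sqcup C$. Suppose $M_{i,j} \cap N_{i,j} = \emptyset$ for all $i,j$; equivalently, $N_{i,j} \subseteq [n]\setminus X$ whenever $(i,j) \in X\times X$ for some $X \in \{A,B,C\}$, with no restriction imposed on off-diagonal cells. For each ordered pair of blocks $X,Y \in \{A,B,C\}$, let $\alpha_{XY}$, $\beta_{XY}$, $\gamma_{XY}$ denote the total number of occurrences of elements of $A$, $B$, $C$ respectively in cells of the sub-array $X \times Y$. The avoidance hypothesis gives $\alpha_{AA}=\beta_{BB}=\gamma_{CC}=0$, and since every axis-parallel line of $N$ contains each symbol exactly $k$ times, one obtains row-sum identities such as $\alpha_{BA}+\alpha_{BB}+\alpha_{BC}=abk$ (summing over rows in $B$) and column-sum identities such as $\alpha_{BA}+\alpha_{CA}=a^2k$ (summing over columns in $A$, using $\alpha_{AA}=0$).

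The key step combines the two row identities for rows in $B$ and in $C$ with the column identity above to eliminate $\alpha_{BA}$ and $\alpha_{CA}$, yielding
\[
\alpha_{BB}+\alpha_{CC} \;=\; a(n-2a)k - (\alpha_{BC}+\alpha_{CB}) \;\leq\; a(n-2a)k,
\]
and symmetric computations for $B$- and $C$-symbols give
\[
\beta_{AA}+\beta_{CC}\leq b(n-2b)k \quad\text{and}\quad \gamma_{AA}+\gamma_{BB}\leq c(n-2c)k.
\]
Adding these three inequalities, the left-hand side rearranges as $(\beta_{AA}+\gamma_{AA})+(\alpha_{BB}+\gamma_{BB})+(\alpha_{CC}+\beta_{CC})$, which equals $(a^2+b^2+c^2)k$ because the cells in each diagonal block $X\times X$ form an $|X|^2$-cell sub-array in which every set has size exactly $k$ and is drawn from $[n]\setminus X$. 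The right-hand side simplifies to $[n(a+b+c)-2(a^2+b^2+c^2)]k = [n^2 - 2(a^2+b^2+c^2)]k$, so after dividing by $k\geq 1$,
\[
3(a^2+b^2+c^2) \;\leq\; n^2.
\]

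To close the argument, I would apply the Cauchy--Schwarz inequality in the form $a^2+b^2+c^2 \geq (a+b+c)^2/3 = n^2/3$, with equality if and only if $a=b=c$. Combined with the displayed inequality, equality must hold throughout, forcing $a=b=c$ and contradicting the hypothesis that $a,b,c$ are not all equal. The main conceptual obstacle is the bookkeeping: one must select exactly the right linear combination of the nine block-sum identities (three row, three column, three diagonal-fill) so that the unknown off-diagonal slack terms $\alpha_{BC}+\alpha_{CB}$, $\beta_{AC}+\beta_{CA}$, $\gamma_{AB}+\gamma_{BA}$ all fall on the correct side of the inequality as non-negative quantities. Once the right combination is identified, the conclusion reduces to a single application of Cauchy--Schwarz, and the tightness at $a=b=c$ explains why the construction itself must necessarily exclude that case.
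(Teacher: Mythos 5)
Your proof is correct and follows essentially the same argument as the paper: both double-count occurrences of the symbols in each block over the diagonal sub-arrays, discard the nonnegative off-diagonal slack to get $3(a^2+b^2+c^2)\le(a+b+c)^2$, and conclude via the equality case of the quadratic-mean/Cauchy--Schwarz inequality. The paper phrases the count per symbol (each symbol of $A$ occurs exactly $nk-2ak$ times outside the first $a$ rows and columns) rather than via your nine block sums, but the bookkeeping is identical in substance.
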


\begin{proof}
Aiming for a contradiction,    suppose that there is an $(n^2,k)$-array $N=(N_{i,j})$ for some $k\ge 1$ such that $M_{i,j}\cap N_{i,j}=\emptyset$ for all $i,j\in[n]$. Let $t=\sum|N_{i,j}|$, where the sum is over all $(i,j)\in(A\times A)\cup(B\times B)\cup(C\times C)$. Since $|N_{i,j}|=k$ for all $i,j$, we know that $t=k(a^2+b^2+c^2)$. However, we will argue that there are not enough symbols available to achieve this value of $t$.
First, consider the symbols in $A$. Each of these $a$ symbols must occur $k$ times in each row (and column) for a total of $nk$ occurrences. Similarly, each of them occurs $ka$ times in the first $a$ rows of $N$ and also $ka$ times in the first $a$ columns of $N$. However, they are forbidden in the intersection of these rows and columns. It follows that each of the symbols in $A$ occurs exactly $nk-2ak$ outside of the first $a$ rows and columns. So the contribution to $t$ from symbols in $A$ is at most $a(nk-2ak)$. By analogous arguments the contributions to $t$ from symbols in $B$ and $C$ are at most $b(nk-2bk)$ and $c(nk-2ck)$, respectively. So 
\[
k(a^2+b^2+c^2)=t\le
a(nk-2ak)+b(nk-2bk)+c(nk-2ck).
\]
Dividing both sides by $k$ and rearranging, we see that
\[
3(a^2+b^2+c^2)\le
(a+b+c)n=(a+b+c)^2.
\]
The arithmetic mean - quadratic mean inequality now implies that $a=b=c$, completing the proof. \halmos
\end{proof}

Note that the $k=1$ case of \tref{t:unavoidable} says precisely that $M$ as defined in \eref{e:ABC} is an unavoidable $(m,m,m)$-array. The statement for more general $k$
eliminates any hope of embedding $M$ in an $(n^2,n-k)$-array with $k>0$. If such an embedding had been possible then, by complementing, we would have obtained an $(n^2,k)$-array that contains no layer.

\section{Latin cuboids}\label{s:cubes}

We start our discussion of hypercuboids by considering the case $d=3$, which, perhaps unsurprisingly, has been studied much more than any $d>3$. The
first result, from 1982, was by Hor\'ak \cite{Hor82}, who proved that $\ncom3{2^k}\le2^k-2$.
Then in 1986, Fu \cite{Fu86} proved that $\ncom3n\le n-2$ when $n=6$ or $n\ge12$. Completing the picture for when $\ncom3n\le n-2$, Kochol \cite{Koc89} showed this:

\begin{theorem}\label{t:Koch}
  $\ncom3n\le n-2$ if and only if $n\ge5$.
\end{theorem}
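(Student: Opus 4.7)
The plan is to prove both directions separately.

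For the ``only if'' direction, I would show that every hypercuboid in $\lhc 3 n {n-2}$ is completable when $n\le 4$. The cases $n\in\{1,2\}$ are vacuous, since $n-2\le 0$. For $n=3$, a hypercuboid in $\lhc 3 3 1$ is a single Latin square of order $3$; up to isotopy this is the Cayley table of $\mathbb{Z}/3\mathbb{Z}$, which extends to the standard cyclic Latin cube of order $3$, so completability holds. For $n=4$, a hypercuboid $H\in\lhc 3 4 2$ is a pair of discordant Latin squares of order $4$, and I would analyse the complementary $(4^2,2)$-array $U_H$ of unused symbols. Interpreting each $2$-subset of $[4]$ as an edge, each row and column of $U_H$ forms a $2$-regular multigraph on four vertices, hence a $C_4$ or a pair of doubled edges. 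A finite case analysis on the possible combinations of these local structures, together with row/column consistency across the array, should in every case exhibit a decomposition of $U_H$ into two Latin squares, thereby completing $H$.

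For the ``if'' direction, I would produce, for each $n\ge 5$, an explicit non-completable Latin cuboid in $\lhc 3 n {n-2}$. Many values are already covered by prior results cited in the paper: Fu's construction handles $n=6$ and $n\ge 12$, while Hor\'ak's handles $n=2^k$ for $k\ge 3$. The remaining cases comprise a small finite set, such as $n\in\{5,7,9,10,11\}$, each of which would be handled by a dedicated construction. The natural template is to design an ``obstruction'' region: an arrangement of $n-2$ Latin squares whose complementary $(n^2,2)$-array of unused symbols is provably not decomposable into two Latin squares, and then extend this obstruction to the full $n\times n$ grid by filling the remaining entries with arbitrary compatible layers. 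An obstruction inspired by the block-diagonal unavoidable $(m,m,m)$-array of \tref{t:unavoidable} is a natural starting point.

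The hard part is the ``if'' direction, and within it, the verification of non-layerability of the complementary $(n^2,2)$-array. The subtlety is twofold: first, the $(n^2,2)$-array must be realisable as $U_H$ for a genuine $H\in\lhc 3 n {n-2}$, which immediately rules out the most natural block-diagonal designs; second, one must rule out \emph{every} possible decomposition into two Latin squares, typically via a parity or counting argument localised to a small sub-array. A uniform construction for all $n\ge 5$ would be preferable to several case-by-case arguments, but both approaches ultimately rely on such an obstruction argument.
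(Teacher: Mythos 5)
The paper does not prove this statement at all: it is quoted verbatim from Kochol \cite{Koc89}, with the earlier partial cases credited to Hor\'ak \cite{Hor82} ($n=2^k$) and Fu \cite{Fu86} ($n=6$ or $n\ge12$). So the only honest comparison is between your outline and what a self-contained proof would require, and on that score your proposal has a genuine gap: everything that is actually new in Kochol's theorem is deferred. Your ``if'' direction reduces, correctly, to producing noncompletable cuboids in $\lhc3n{n-2}$ for $n\in\{5,7,9,10,11\}$, but you give no construction for any of these --- only a ``template''. Worse, the template you name is self-undermining: as you yourself observe, \tref{t:unavoidable} (with $k=2$) shows that the block-diagonal obstruction cannot sit inside the complement of any $(n^2,2)$-array, so it can never arise as the used-symbol pattern of an $H\in\lhc3n{n-2}$. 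You therefore have no candidate obstruction at all for the five outstanding orders, and no argument that one exists. A workable route is to build $H$ first (so realisability is automatic) and then prove $U_H$ nonlayerable, which is how the constructions in \cite{Koc89,BCMPW12,MW08} proceed; designing the array of unused symbols first and hoping to realise it afterwards is the harder direction.

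The ``only if'' direction also needs tightening. To conclude $\ncom34>2$ you must show completability for \emph{all} $k\le n-2$, i.e.\ for $k=1$ and $k=2$ when $n=4$. The $k=1$ case follows for every $n$ by cyclic development of the symbols (as the paper notes when observing $\ncom dn\ge2$); your isotopy argument is specific to $n=3$ and does not transfer to order $4$, where not every Latin square is isotopic to a group table. The $k=2$, $n=4$ case analysis you describe (rows and columns of $U_H$ as $2$-regular multigraphs on four vertices, hence $C_4$'s or doubled edges) is a plausible finite check, but it is asserted rather than carried out, and the global consistency step --- choosing a perfect matching in each row- and column-graph so that the two colour classes form Latin squares --- is exactly where the work lies. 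As written, the proposal is a plan whose two hard components are both left open.
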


Looking for tighter upper bounds, in \cite{Koc91}, the same author proved that $\ncom3n\le n-c$
for $c\ge3$ whenever $n\in\{3c,4c,5c\}$ or $n\ge6c$.
Then, in \cite{Koc95}, Kochol proved that for any $k$ and $n$ satisfying
$\frac{1}{2}n<k\le n-2$ there is a noncompletable Latin cuboid in
\lhc3nk. In particular, this gives for every $n\ge5$ that $\ncom3n \leq \lfloor{n/2}\rfloor +1$. Furthermore, it is simple to use such examples to create noncompletable
Latin hypercuboids in \lhc{d}nk for all $d\ge3$ (see Theorem \ref{t:monotone} in Section~\ref{sec_hd}).  Kochol conjectured
that all noncompletable Latin cuboids are more than half-full, but
examples of noncompletable $5\times 5\times 2$, $6\times 6\times 2$,
$7\times 7\times 3$ and $8\times8\times4$ Latin cuboids were
subsequently given in \cite{MW08}.

Kochol \cite{Koc12} showed that $\next3n\le n-m$
for even $m>2$ and $n\ge4m-2$.
The same paper contains the following useful general embedding result:

\begin{theorem}\label{t:embeddbl}
    If there exists a nonextendible (respectively, noncompletable) cuboid in
\lhc3r{r-m} then there exists a nonextendible (respectively, noncompletable)
cuboid in \lhc3n{n-m} for every $n\ge2r$. 
\end{theorem}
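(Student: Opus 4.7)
The plan is to construct $H^* \in \lhc{3}{n}{n-m}$ that contains $H$ as the sub-cuboid $H^*|_{[r]^2 \times [r-m]}$, chosen so that the unused-symbol structure on $[r]^2$ is preserved:
\[
U_{H^*}(i,j) = U_H(i,j) \subseteq [r] \qquad \text{for every } (i,j) \in [r]^2.
\]
Write $s = n - r$; by hypothesis $s \geq r$. The construction uses symbols in $[r]$ for the embedded $H$, and fills the cells $(i,j,k)$ with $(i,j) \in [r]^2$ and $k \in [r-m+1, n-m]$ using values in $[r+1, n]$ drawn from a Latin cube of order $s$ on $[r+1, n]$ restricted to its first $r$ rows and columns (which exists since $r \leq s$). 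This arranges the depth-line at each $(i,j) \in [r]^2$ to use exactly $([r] \setminus U_H(i,j)) \cup [r+1, n]$, leaving precisely $U_H(i,j)$ missing. The remaining cells, those with $i > r$ or $j > r$, are filled to make each layer of $H^*$ into a full $n \times n$ Latin square on $[n]$ and keep every depth-line injective, via compatible Latin-rectangle fillings that are available thanks to $s \geq r$.

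Given this construction, both conclusions follow from the identity $U_{H^*}(i,j) = U_H(i,j) \subseteq [r]$ by the same elementary restriction argument. If $H^*$ is extendible by a Latin square $L$ on $[n]$, then $L[i,j] \in U_H(i,j) \subseteq [r]$ on $[r]^2$, so Latin-ness of $L$ along rows $i \in [r]$ forces the block $L|_{[r]^2}$ to be a Latin square of order $r$ on $[r]$; since $L[i,j] \in U_H(i,j)$, this block is an extension layer of $H$, contradicting the nonextendibility of $H$. For noncompletability, if $H^*$ completes to a Latin cube $C^*$ on $[n]$, then at each $(i,j) \in [r]^2$ the length-$n$ depth-line of $C^*$ is a permutation of $[n]$ whose first $n - m$ entries already use $([r] \setminus U_H(i,j)) \cup [r+1, n]$, so the $m$ entries contributed by the completion layers $k \in [n-m+1, n]$ must lie in $U_H(i,j) \subseteq [r]$. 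The same row-restriction argument shows each such layer has a Latin-square $[r]^2$ block, and stacking these onto $H$ yields a Latin cube of order $r$ completing $H$, contradicting noncompletability.

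The main obstacle is realising the filling outside the $[r]^2$ column: one must simultaneously make every layer of $H^*$ a Latin square on $[n]$ and keep every depth-line injective, with the $r \times r$ corner blocks already prescribed (by $H$ in the original layers and by the Latin-cube restriction in the extended layers). The hypothesis $n \geq 2r$ enters essentially here, since $s \geq r$ is what both permits the Latin-cube-restriction step in the corner and ensures that each prescribed $r \times r$ block can be extended to an $n \times n$ Latin square on $[n]$ via a Latin-rectangle filling compatible across depths.
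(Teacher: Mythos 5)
The paper states this theorem without proof --- it is quoted from Kochol \cite{Koc12} --- so there is no internal proof to compare against; your argument has to stand on its own. The second half of your proposal, the reduction, is fine: once you have $H^*\in\lhc3n{n-m}$ with $U_{H^*}(i,j)=U_H(i,j)\subseteq[r]$ on $[r]^2$, the observation that any extension layer (or any completion layer) of $H^*$ takes values in $[r]$ on the $[r]^2$ block, and hence restricts there to an order-$r$ Latin square extending $H$, is correct and is exactly the right way to finish. The arithmetic in the corner is also right: the $r-m$ layers of $H$ together with the first $r$ rows and columns of an order-$s$ Latin cube on $[r+1,n]$ (whose depth lines are full permutations of $[r+1,n]$) do leave precisely $U_H(i,j)$ unused.

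The gap is the construction of $H^*$ itself, which you dismiss in one sentence as ``compatible Latin-rectangle fillings that are available thanks to $s\ge r$.'' This is the entire content of the theorem. Ryser's theorem lets you complete each layer's prescribed $r\times r$ corner to an $n\times n$ Latin square individually (the condition $2r-n\le0$ holds), but you need all $n-m$ completions \emph{simultaneously}, subject to the additional constraint that every depth line outside the corner is injective. That is a genuinely three-dimensional condition which no Hall/Ryser-type rectangle-completion result delivers, and the premise of this whole paper is that such simultaneous layer constraints \emph{can} fail. For $n=2r$ one can rescue your plan with an explicit block decomposition: partition rows, columns and symbols as $[r]\cup[r+1,2r]$ and layers as the first $r-m$ and last $r$, and place in each of the four row--column blocks either an order-$r$ Latin cuboid in $\lhc3r{r-m}$ or an order-$r$ Latin cube on the appropriate symbol class, so that every layer and every depth line checks out. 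But for $n>2r$ the off-corner blocks have dimensions $r\times s$ and $s\times s$ with $s>r$ and can no longer be assigned a single symbol class each (an $s\times s$ block of a Latin square cannot use only $r$ symbols), so the rows of the later layers must mix the two symbol classes and the depth-line disjointness between the first $r-m$ and last $s$ layers has to be engineered explicitly. Until you supply that construction, the proof is incomplete precisely where the hypothesis $n\ge2r$ has to do its work.
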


Finally, Bryant et al. \cite{BCMPW12} showed the following two
results:

\begin{theorem}\label{t:exactly-half}
$\ncom3{2m}\le m$ for all $m \geq 4$.
\end{theorem}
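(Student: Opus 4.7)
The plan is to exhibit, for each $m\ge 4$, an explicit Latin cuboid $H^{(m)}\in\lhc{3}{2m}{m}$ that cannot be completed to a Latin cube of order $2m$. Since Kochol's construction \cite{Koc95} already provides noncompletable cuboids throughout the range $\tfrac{1}{2}n<k\le n-2$, \tref{t:exactly-half} concerns the precise boundary $k=n/2$. The base case $m=4$---the $8\times 8\times 4$ cuboid from \cite{MW08}---shows the boundary is attainable at $n=8$, so the task is to push this existence to all even $n\ge 8$. Note that \tref{t:embeddbl} does not directly help here: it preserves the gap $n-k$, whereas the gap we want, namely $m$, grows with $n$.

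My first attempt would be induction on $m$. Given a noncompletable $H\in\lhc{3}{2m}{m}$, I would try to build a noncompletable $H'\in\lhc{3}{2m+2}{m+1}$ by appending two new rows (indexed $2m+1,2m+2$), two new columns, and one new layer, together with two new symbols $\alpha=2m+1$ and $\beta=2m+2$. The boundary cells would be filled so that (a) $H'$ is a legitimate Latin cuboid, and (b) in any hypothetical completion of $H'$ to a Latin cube of order $2m+2$, the new symbols $\alpha,\beta$ are constrained rigidly enough that the restriction to indices $[2m]^3$ and symbols $[2m]$ yields a completion of $H$, contradicting the inductive hypothesis.

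The main obstacle is achieving the rigidity required by (b): any completion of $H'$ is free to place $\alpha$ and $\beta$ anywhere compatible with the Latin condition, so the appended rows, columns, and layer must pin these new symbols down in enough interior cells to essentially determine their positions throughout the cube. This is a delicate combinatorial design and is likely where most of the work lies. If the inductive approach proves too rigid to engineer, a fallback plan is a direct parameterised construction: partition $[2m]$ into halves $A$ and $B$ and design the $m$ layers of $H^{(m)}$ so that the array $U_{H^{(m)}}$ of unused symbols exhibits a combinatorial obstruction to containing a layer---ideally via a counting argument in the spirit of \tref{t:unavoidable}, where balance constraints across cells preclude any simultaneous choice of representatives that forms a Latin square of order $2m$.
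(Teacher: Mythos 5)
There is a genuine gap: your proposal is a plan of attack, not a proof. The paper itself does not prove this statement---it cites Bryant, Cavenagh, Maenhaut, Pula and Wanless \cite{BCMPW12}---so the benchmark is whether your argument would stand on its own, and it does not. Your primary route (induction from $2m$ to $2m+2$, adding two rows, two columns, one layer and two symbols) leaves the entire burden on the ``rigidity'' requirement (b), which you correctly identify as the crux and then do not address; there is no reason given why the boundary cells can force every completion of $H'$ to restrict to a completion of $H$, and no construction of those boundary cells is offered. Nothing is actually built, so nothing is proved.

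Your fallback is closer in spirit to the real construction but aims at the wrong target. You propose to design $H^{(m)}$ so that $U_{H^{(m)}}$ admits \emph{no} layer at all, via a counting argument in the style of \tref{t:unavoidable}. That would establish $\next3{2m}\le m$, which is strictly stronger than what is known for even $n$ (compare \tref{t:stateofartext}, which only gives $3n/4+O(1)$ there); a counting obstruction of the \tref{t:unavoidable} type at density $k=n/2$ is exactly what that theorem's arithmetic shows you cannot get when $a=b=c$. What the actual construction does---and what the paper's proof of \tref{t:nonlayerable} reveals about it---is weaker and subtler: one arranges an $m\times m$ subarray of $U_H$ so that some specific cell--symbol pair (there, symbol $n$ in cell $(1,1)$) cannot occur in \emph{any} layer, by a local counting argument inside that subarray. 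Individual layers may well exist, but a decomposition into $m$ layers would need that symbol to appear in that cell in one of them, so $U_H$ is nonlayerable and $H$ is noncompletable. You would also need to verify realisability, i.e.\ exhibit the $m$ layers of an actual Latin cuboid $H^{(m)}$ whose unused-symbol array has this structure, which your sketch does not attempt.
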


\begin{theorem}\label{t:under-half}
  $\next3{2m-1}\le m-1$ for all even $m\notin\{2,6\}$.
\end{theorem}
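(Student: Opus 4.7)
The plan is to construct, for each even $m \notin \{2,6\}$, an explicit Latin cuboid $H \in \lhc{3}{2m-1}{m-1}$ and verify that no Latin cuboid in $\lhc{3}{2m-1}{m}$ contains it. Writing $n = 2m-1$, this is equivalent to exhibiting a realisable $(n^2, m)$-array of unused symbols that contains no layer. A first observation is that the Pebody-style blocky construction from \tref{t:unavoidable} does not suffice on its own: with $n = 2m-1$ and $\max(a,b,c) \le m-1$, the only admissible partition (up to reordering) is $(a,b,c) = (m-1, m-1, 1)$, and the resulting unavoidable H\"aggkvist array has many empty cells, so it is not directly realisable as the used-symbol array of a Latin cuboid. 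One therefore needs a construction of the $U_H$-array that is simultaneously unavoidable and ``full''.

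The natural strategy is to exploit two combinatorial ingredients whose existence is pinpointed by the hypothesis on $m$: a $1$-factorisation of $K_m$, available since $m$ is even, and a pair of mutually orthogonal Latin squares of order $m$, available since $m \notin \{2,6\}$. I would identify $[2m-1]$ with a distinguished point $\infty$ together with a set of size $2m-2$ to be paired off by the $1$-factorisation. The $m-1$ layers of $H$ are then indexed by the $m-1$ factors of $K_m$: within each layer, the corresponding factor dictates a block-like placement of symbols in rows and columns, and the orthogonal Latin squares coordinate these placements across layers so that each axis-parallel line of $H$ meets every symbol at most once. Checking $H \in \lhc{3}{2m-1}{m-1}$ would then reduce to straightforward verifications using the defining properties of the $1$-factorisation and the orthogonality.

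The main obstacle is proving nonextendibility. Assuming for contradiction that an extending layer $L$ exists, I would analyse how $L$ distributes the distinguished symbol $\infty$ relative to the pairings imposed by each factor of the $1$-factorisation. A double-counting argument in the spirit of the AM--QM bound in the proof of \tref{t:unavoidable}, but refined to track $\infty$ separately from the paired symbols, should yield a contradiction: the symbol $\infty$ must occupy exactly $n$ cells of $L$, yet the combined row/column constraints from the earlier $m-1$ layers leave no configuration for $\infty$ that is compatible with the pairing structure, and a Hall-type argument then rules out any completion of $L$ on the non-distinguished symbols. The exceptional values $m \in \{2,6\}$ correspond to the failure of the MOLS ingredient, consistent with their exclusion from the theorem.
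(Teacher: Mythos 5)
The paper does not actually prove this theorem: it is quoted from Bryant, Cavenagh, Maenhaut, Pula and Wanless \cite{BCMPW12}, so there is no in-paper argument to compare against. Judged on its own terms, your proposal has a genuine gap: it is a research plan rather than a proof. You correctly identify that $m\notin\{2,6\}$ should correspond to the existence of a pair of MOLS of order $m$, but the cuboid itself is never defined --- ``a block-like placement of symbols in rows and columns, \dots\ coordinated by the orthogonal Latin squares'' does not specify an element of $\lhc{3}{2m-1}{m-1}$, so there is nothing to verify and nothing whose non-extendibility can be argued. More seriously, the heart of any such proof is the obstruction to adding a layer, and your sketch replaces it with hope: ``should yield a contradiction.'' Note that every line of the unused-symbol array $U_H$ automatically satisfies a Hall-type condition (each unused set has size $m$ and each symbol occurs $m$ times per line of $U_H$), so no single-line SDR argument can fail; the obstruction must be global. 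The counting in the proof of \tref{t:unavoidable} cannot be transplanted either: it depends on the three diagonal blocks forbidding $a^2+b^2+c^2>n^2/3$ cells' worth of symbols, and \tref{t:unavoidable} itself (as the paper remarks) shows that this blocky pattern cannot be embedded in the used-symbol array of any cuboid with nonempty unused sets, so a genuinely different obstruction is required. In \cite{BCMPW12} that obstruction is a parity argument, which is where the evenness of $m$ enters; your proposed use of evenness (a $1$-factorisation of $K_m$ indexing the layers) is speculative and unsupported.

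Two smaller inaccuracies: your claim that $(a,b,c)=(m-1,m-1,1)$ is the \emph{only} partition of $n=2m-1$ with all parts at most $m-1$ is false (for $m=8$, $n=15$ one also has $(7,6,2)$, $(6,6,3)$, etc.); and the reason the Pebody array fails here is not merely that it ``has many empty cells'' but the stronger non-embeddability statement proved in \tref{t:unavoidable}. Neither of these is fatal, but the missing construction and the missing non-extendibility argument are.
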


The case
$m=2$ is a genuine exception in \tref{t:under-half}, in the sense that
every Latin cuboid in \lhc331 is completable.  It is unknown whether
$m=6$ is a genuine exception.  Together with the results  from \cite{Koc95,MW08} mentioned above, \tref{t:exactly-half} shows:

\begin{theorem}\label{t:stateofartcom}
    $\ncom3n\le\lceil n/2\rceil$ for all $n\ge5$.
\end{theorem}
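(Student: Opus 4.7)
The statement is a matter of assembling the pieces that the paper has already cited, so my plan is to dispose of it by a short case analysis on the parity of $n$, confirming that in each case one of the quoted results matches or beats $\lceil n/2\rceil$.

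First I would split on parity. For odd $n$, $\lceil n/2\rceil = \lfloor n/2\rfloor + 1$, so the bound $\ncom3n\le \lfloor n/2\rfloor+1$ coming from Kochol's construction in \cite{Koc95} (valid for all $n\ge5$) is already exactly what we need; no further work is required in this case. This handles in particular $n=5$, where $\lceil 5/2\rceil=3$ and Kochol's construction produces a noncompletable element of $\lhc 353$.

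For even $n$, write $n=2m$, so $\lceil n/2\rceil = m$. When $m\ge4$, i.e.\ $n\ge8$, the bound is given directly by \tref{t:exactly-half}. The only even $n\ge5$ not covered so far is $n=6$, and here the noncompletable $6\times6\times2$ cuboid constructed in \cite{MW08} witnesses $\ncom36\le 2\le3=\lceil 6/2\rceil$.

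Putting these three cases together yields $\ncom3n\le\lceil n/2\rceil$ for every $n\ge5$, as required. Since no new construction is introduced and each case reduces to quoting a previously stated result, there is no real obstacle beyond checking that odd $n$ aligns with Kochol's $\lfloor n/2\rfloor+1$ bound and that the small even outlier $n=6$ is covered by the explicit example from \cite{MW08}; the only temptation to watch out for is to avoid double-counting or missing the borderline values $n\in\{5,6,7\}$ when tabulating which bound applies where.
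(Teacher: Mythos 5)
Your proposal is correct and follows essentially the same route as the paper, which likewise obtains the bound by combining Kochol's $\ncom3n\le\lfloor n/2\rfloor+1$ result from \cite{Koc95} (which equals $\lceil n/2\rceil$ for odd $n$), \tref{t:exactly-half} for even $n\ge8$, and the explicit noncompletable $6\times6\times2$ cuboid from \cite{MW08} for the remaining case $n=6$. Your explicit parity case analysis just spells out what the paper leaves as a one-line citation.
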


Combining \tref{t:under-half} with \tref{t:embeddbl}, or by using \cite{Koc12}, we have:

\begin{theorem}\label{t:stateofartext}
\[\next3n\le
\begin{cases}
(n-1)/2&\text{if $n\equiv3\bmod4$,}\\
3n/4+O(1)&\text{otherwise}.
\end{cases}
\]
\end{theorem}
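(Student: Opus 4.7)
Both bounds are corollaries of \tref{t:under-half}, direct in the first case and via the embedding result \tref{t:embeddbl} in the second.

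For the case $n\equiv 3\pmod 4$, I would simply write $n=2m-1$, so that $m=(n+1)/2$ is even. Assuming $n\notin\{3,11\}$ (that is, $m\notin\{2,6\}$), \tref{t:under-half} immediately supplies a nonextendible cuboid in $\lhc{3}{n}{m-1}$, giving $\next{3}{n}\le m-1=(n-1)/2$ at once.

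For the ``otherwise'' case the strategy is to locate a smaller $r\equiv 3\pmod 4$ with $r\le n/2$, apply the Case~1 construction to $r$, and then embed up to $n$. Concretely, I would take $r$ to be the largest integer with $r\equiv 3\pmod 4$, $r\le n/2$, and $r\notin\{3,11\}$; provided $n$ is large enough, such $r$ exists and satisfies $r\ge n/2-O(1)$ (since numbers $\equiv 3\pmod 4$ occur with density $1/4$). \tref{t:under-half} applied to $r$ yields a nonextendible member of $\lhc{3}{r}{(r-1)/2}$, which, rewriting $(r-1)/2=r-(r+1)/2$, is a nonextendible cuboid in $\lhc{3}{r}{r-m}$ for $m=(r+1)/2$. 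Since $n\ge 2r$, \tref{t:embeddbl} then lifts this to a nonextendible cuboid in $\lhc{3}{n}{n-(r+1)/2}$, and so
\[
\next{3}{n}\le n-\frac{r+1}{2}\le n-\frac{n/2-O(1)}{2}=\frac{3n}{4}+O(1).
\]

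I do not foresee any genuine obstacle; the argument is essentially parameter bookkeeping. The only finicky point is the handful of small $n$ for which the indicated choice of $r$ is unavailable (notably $n\in\{3,11\}$ in the first case, and a small initial segment of values in the second case). These can either be absorbed into the $O(1)$ term or dealt with by the alternative route the statement alludes to, namely Kochol \cite{Koc12}, where $\next{3}{n}\le n-m$ is proved for even $m>2$ and $n\ge 4m-2$; taking the largest admissible even $m\sim n/4$ there also yields $3n/4+O(1)$ without invoking \tref{t:embeddbl} at all.
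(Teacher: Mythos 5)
Your proposal is correct and follows essentially the same route as the paper, which derives the first case directly from \tref{t:under-half} (with $m=(n+1)/2$ even) and the second by combining \tref{t:under-half} with \tref{t:embeddbl}, or alternatively by citing Kochol's bound $\next3n\le n-m$ for even $m>2$ and $n\ge4m-2$. The one finicky point you flag, $n=11$ (where $m=6$ is excluded from \tref{t:under-half} and Kochol's result does not yet apply), is equally unaddressed in the paper's own one-line derivation, so it is not a defect introduced by your argument.
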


\tref{t:stateofartcom} and \tref{t:stateofartext} represent the tightest known upper bounds on $\NC_3$ and $\NE_3$ currently known for general $n$.
It would  be interesting to know
whether an analogue of \tref{t:under-half} holds for odd $m$. In that direction, we can show:

\begin{theorem}\label{t:nonlayerable}
  For every $n\ge5$ there exists a nonlayerable $(n^2,\lfloor n/2\rfloor)$-array.  
\end{theorem}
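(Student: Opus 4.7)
I would deduce this from \tref{t:stateofartcom} by selecting a noncompletable Latin cuboid of the right depth. Kochol's construction~\cite{Koc95} produces a noncompletable cuboid in $\lhc{3}{n}{k}$ for every $k$ with $\frac{1}{2}n<k\le n-2$, and \tref{t:exactly-half} produces one in $\lhc{3}{2m}{m}$ for all $m\ge 4$. Together these yield, for every $n\ge 5$ except possibly $n=6$, a noncompletable $H\in\lhc{3}{n}{\lceil n/2\rceil}$. For the case $n=6$, I would extend the $6\times 6\times 2$ noncompletable example of~\cite{MW08} by one layer — any extension of a noncompletable cuboid is itself noncompletable, because a completion of the extension would in particular be a completion of the original — and this yields a noncompletable cuboid in $\lhc{3}{6}{3}$ provided the small example is extendible; failing that, a direct construction of a nonlayerable $(6^2,3)$-array can be given by hand, for instance by adapting the Pebody-style block construction underlying \tref{t:unavoidable}.

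Given such an $H$, the array of unused symbols $U_H$ is, by definition, an $(n^{d-1},n-k)$-array with $d=3$ and $k=\lceil n/2\rceil$, which is an $(n^2,\lfloor n/2\rfloor)$-array since $n-\lceil n/2\rceil=\lfloor n/2\rfloor$. The equivalence already observed in the paper — that $U_H$ is layerable if and only if $H$ is completable — then immediately converts the noncompletability of $H$ into the nonlayerability of $U_H$, and this $U_H$ is the array the theorem asks for.

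The bulk of the work is therefore already packaged into the cited noncompletability results and into the layerability/completability dictionary for $U_H$. The only point requiring care is the bookkeeping that a noncompletable cuboid can be chosen at level exactly $\lceil n/2\rceil$ — and in particular the treatment of $n=6$ — which I expect to be the main (and essentially only) obstacle to a clean write-up. A conceptually distinct, purely array-theoretic construction that avoids passing through a Latin cuboid entirely would be more satisfying, but seems unnecessary since the existence claim follows so cheaply from results already in the paper.
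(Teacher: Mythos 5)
Your reduction via the dictionary ``$U_H$ is layerable if and only if $H$ is completable'' is sound, and for odd $n\ge5$ your route through Kochol's result \cite{Koc95} does prove the statement more cheaply than the paper does: taking $k=\lceil n/2\rceil=(n+1)/2$ one has $\frac{1}{2}n<k\le n-2$ precisely when $n\ge5$, so a noncompletable cuboid in \lhc3n{\lceil n/2\rceil} exists and its $U_H$ is a nonlayerable $(n^2,\lfloor n/2\rfloor)$-array. The paper instead constructs, for odd $n=2m+1$, an explicit $(n^2,m)$-array with an unfillable subarray planted in its top-left corner. What that buys is a direct, self-contained construction that never passes through a Latin cuboid, in keeping with the stated aim of studying $(n^d,k)$-arrays without worrying about realisability; what your route buys is brevity. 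For even $n\ge8$ you and the paper do essentially the same thing. One small caution there: you need the \emph{construction} behind \tref{t:exactly-half}, which produces a noncompletable cuboid at level exactly $m$, not merely the inequality $\ncom3{2m}\le m$, since a noncompletable cuboid at some level below $m$ need not be extendible up to level $m$.

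The genuine gap is $n=6$. Your first fallback requires the noncompletable $6\times6\times2$ example of \cite{MW08} to be extendible; this is not established by anything quoted in the paper, and nothing guarantees it. Your second fallback --- adapting the Pebody-style block construction --- points in exactly the direction that \tref{t:unavoidable} closes off: the $k\ge1$ case of that theorem shows the block array $M$ cannot be embedded in the complement of any $(n^2,k)$-array, so no layerless $(6^2,3)$-array drops out of that construction, and it is not clear how it would yield a merely nonlayerable one either. As written, the case $n=6$ is therefore not proved; whichever source one leans on (the paper's even-$n$ argument quotes \tref{t:exactly-half} only for $m\ge4$), this value of $n$ needs explicit, separate care.
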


\begin{proof}
    The construction used to prove \tref{t:exactly-half} suffices if $n$ is even. We will modify that construction for the case when $n=2m+1$ is odd. The key idea is to embed a nonlayerable subarray in the first $m$ rows and $m$ columns. Let $S=\{1,\dots,m\}$ and $T=\{m+1,\dots,n\}$. Define an $(m+1)\times(m+1)$ array $B=(b_{i,j})$, with symbols from $T$, by $b_{i,j}\equiv i+j-3\bmod (m+1)$. Then we define our $(n^2,\lfloor n/2\rfloor)$-array $A=(A_{i,j})$ by
    \begin{align*}
        A_{ij}=\begin{cases}
        S\cup\{n\}\setminus\{1\}&\text{if $i=1$ and $j\le m$},\\
        S\cup\{n\}\setminus\{2\}&\text{if $2\le i\le m$ and $j=1$},\\%[1ex]
      S&\text{if $2\le i\le m$ and $2\le j\le m$},\\[1ex]
      T\cup\{1\}\setminus\{n,b_{1,j-m}\} &\text{if $i=1$ and $j\ge m+2$},\\
        T\cup\{2\}\setminus\{n,b_{i,1}\} &\text{if $2\le i\le m$ and $j=m+1$},\\
       T\setminus\{b_{i,j-m}\} &\text{if $(i,j)=(1,m+1)$ or ($2\le i\le m$ and $j>m+1$)},\\[1ex]
    T\cup\{1\}\setminus\{n,b_{i-m,j}\} &\text{if $i+j=n$ and $j\le m$},\\
    T\cup\{2\}\setminus\{n,b_{i-m,j}\} &\text{if $i\in T\setminus\{m+1,n-1\}$ and $j=1$},\\
    T\setminus\{b_{i-m,j}\} &\text{if $(i,j)=(m+1,1)$ or ($2\le j\le m<i$ and $i+j\ne n$)},\\[1ex]
      S\cup\{n\}\setminus\{1\}&\text{if $m<i=j-1$},\\
      S\cup\{n\}\setminus\{2\}&\text{if $i\in T\setminus\{m+1,n-1\}$ and $j=m+1$},\\
        T\setminus\{b_{m+1,j-m}\} &\text{if $m<i=j$},\\
        S&\text{otherwise}.
        \end{cases}
    \end{align*}
It is routine to check that $A$ is an $(n^2,\lfloor n/2\rfloor)$-array. 
We next argue that it is not layerable. It suffices to show that there is no layer $L=(l_{i,j})$ in $A$ that has $l_{1,1}=n$. Let $M$ be the submatrix at the intersection of the first $m$ rows and first $m$ columns of such an $L$. The only symbols that can appear in $M$ are $S\cup\{n\}$. However, $n$ cannot occur more than once, and $1$ and $2$ cannot appear in row $1$ or column $1$ respectively. Each symbol in $S\setminus\{1,2\}$ can appear $m$ times in $M$. It follows that $M$ cannot be filled, so $L$ does not exist. \halmos
\end{proof}

Note that we can do slightly better than \tref{t:nonlayerable} when $n\equiv3\bmod4$, because in that case \tref{t:under-half} gives us an $(n^2,\lceil n/2\rceil)$-array that does not contain any layer.

Various authors working on $(m,m,m)$-arrays have proved results that include extendibility of Latin cuboids as a special case.
Cutler and \"Ohman \cite{CO06} showed for all $m$ that every
Latin cuboid in \lhc3{2mk}m is extendible, provided $k$ is
sufficiently large.
Andr\'en's PhD thesis \cite{And10} proved \cref{cj:Hag} for even $n$,
although it seems she did not produce a journal version of this result.
Instead, she published a proof \cite{And12} only for the case when
$n$ is a power of $2$. She then teamed up with Casselgren and \"Ohman
\cite{ACO13}, to publish a proof of the odd case of \cref{cj:Hag}.

\medskip

We conclude this section by considering a graphical reformulation of our setting of Latin cuboids and $(n^2,k)$-arrays. There is a natural bijection between Latin squares of order $n$ and optimally coloured complete bipartite graphs. 
In particular, a Latin square of order $n$ using symbols $\{1, \ldots, n\}$ corresponds to a complete bipartite graph $K_{n,n}$ where we think of one set of the vertices as the rows $1, \ldots, n$ of the Latin square, the other set of vertices as the columns $1, \ldots, n$ of the Latin square, and we colour the edge $ij \in E(K_{n,n})$ with the colour $c(ij) \in \{1, \ldots, n\}$ which corresponds to the symbol in row $i$ and column $j$ of the Latin square. Now, for the case of Latin cuboids (i.e.\ when $d=3$), and more generally $(n^2, k)$-arrays, it may prove useful to consider the problems of extendibility and the existence of a layer, respectively, as a list colouring problem in $K_{n,n}$. This can be realised in the following way:

\begin{qstn}
    Let $K_{n,n}$ be such that for each edge $e \in E(K_{n,n})$ there exists a list $L_e$ of exactly $k$ colours in $\{1, \ldots, n\}$ such that for every $i \in \{1, \ldots, n\}$ and every vertex $v \in K_{n,n}$ there exist exactly $k$ edges $e$ incident to $v$ such that $i \in L_e$. For which values of $k$ does every such collection of lists admit a proper colouring of $K_{n,n}$ from these lists?
\end{qstn}

Note that a proper colouring of $K_{n,n}$ in this case gives rise to an optimally coloured $K_{n,n}$, or equivalently, a Latin square. The lists $\{L_e\}_{e \in E(K_{n,n})}$ correspond to an $(n^2,k)$-array, and whether or not $K_{n,n}$ is properly colourable from these lists corresponds to whether the $(n^2,k)$-array contains a layer. This may prove to be an insightful way to study the existence of layers in arrays and extendibility of Latin cuboids.

\bigskip

\section{Higher dimensions}\label{sec_hd}

We now discuss what is known for dimensions $d>3$. We start with an easy observation that $\NC$ and $\NE$ decrease monotonically as dimension increases.

\begin{theorem}\label{t:monotone}
  If $d'>d$ then $\ncom{d'}n\le\ncom{d}n$ and $\next{d'}n\le\next{d}n$.
\end{theorem}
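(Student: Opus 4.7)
The plan is to prove both inequalities by induction on $d'-d$, so it suffices to handle the case $d'=d+1$. Given any $H\in\lhc{d}{n}{k}$, I would construct an $H'\in\lhc{d+1}{n}{k}$ that inherits noncompletability (respectively, nonextendibility) from $H$. Identifying the symbol set $[n]$ with $\mathbb{Z}/n\mathbb{Z}$, define
$$H'[i_1,\dots,i_{d-1},i_d,i_{d+1}]=H[i_1,\dots,i_{d-1},i_{d+1}]+i_d\pmod{n},$$
where $i_1,\dots,i_d$ range over $[n]$ and $i_{d+1}$ ranges over $[k]$. In effect, the new axis adds a cyclic shift to the entries of $H$.

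The first step is to verify $H'\in\lhc{d+1}{n}{k}$. Along a line varying any one of $i_1,\dots,i_{d-1}$ or $i_{d+1}$, the base values $H[\dots]$ are distinct (this is an axis-parallel line of $H$), and adding the constant $i_d$ preserves distinctness. Along a line varying the new index $i_d\in[n]$, the base value $H[\dots]$ is fixed while we add every element of $\mathbb{Z}/n\mathbb{Z}$ exactly once, producing all $n$ distinct symbols. Hence every axis-parallel line of $H'$ is free of repetitions.

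The second step is the contrapositive: if $H'$ is extendible (respectively, completable), then so is $H$. Let $H''$ be the witnessing hypercuboid in $\lhc{d+1}{n}{k+1}$ (respectively, Latin hypercube of dimension $d+1$ and order $n$), and slice at $i_d=0$ by setting
$$\tilde H[i_1,\dots,i_{d-1},i_{d+1}]:=H''[i_1,\dots,i_{d-1},0,i_{d+1}].$$
Every axis-parallel line of $\tilde H$ is a restriction of an axis-parallel line of $H''$ with $i_d=0$ fixed, so $\tilde H$ lies in $\lhc{d}{n}{k+1}$ (respectively, is a Latin hypercube of dimension $d$ and order $n$). Since $H'[i_1,\dots,i_{d-1},0,i_{d+1}]=H[i_1,\dots,i_{d-1},i_{d+1}]$, the slice $\tilde H$ agrees with $H$ on its first $k$ layers, so $\tilde H$ extends (respectively, completes) $H$. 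Applying this with $k=\ncom{d}{n}$ or $k=\next{d}{n}$ yields the two claimed inequalities for $d'=d+1$, and iteration delivers the general case.

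I do not anticipate any serious obstacle: the construction is the standard cyclic-shift trick (the same device that turns a single Latin square row into a full square), and once it is in place both the Latin property for $H'$ and the lifting of extensions/completions reduce to one-line checks.
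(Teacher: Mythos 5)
Your proposal is correct and is essentially the paper's own argument: the paper also develops $H$ cyclically along the new coordinate(s) (adding $i_1+\cdots+i_{d'-d}$ in one step rather than inducting one dimension at a time) and then recovers an extension of $H$ by slicing the extended higher-dimensional object at a fixed value of those coordinates. The only cosmetic differences are your induction on $d'-d$ and your choice of slice value ($0$ versus $n$), neither of which changes the substance.
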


\begin{proof}
  Suppose that $H\in\lhc{d}{n}{k}$. Then we can define $I\in\lhc{d'}nk$ by
  $$I[i_1,\dots,i_{d'}]\equiv i_{1}+\dots+i_{d'-d}+H[i_{d'-d+1},\dots,i_{d'}]\bmod n.$$
  If $I$ is extendible to $I'\in\lhc{d'}n{k'}$ for some $k'>k$
  then define $H'\in\lhc{d}n{k'}$ by
  $$H'[i_{d'-d+1},\dots,i_{d'}]=I'[n,n,\dots,n,i_{d'-d+1},\dots,i_{d'}]$$
  and note that $H'$ is an extension of $H$. Hence $I$ is
  noncompletable (respectively nonextendible) if $H$ is noncompletable
  (respectively nonextendible). The result follows. \halmos
\end{proof}

Potapov \cite{Pot12} shows that every \lhc{d}{n}{k} is completable
for $k<n\le4$ and arbitrary $d$. It then follows from \tref{t:monotone}
and \tref{t:Koch} that $\ncom{d}{n}$ and $\next{d}{n}$ are defined
if and only if $n\ge5$ and $d\ge3$.

A result of Casselgren, Markstr\"om and Pham \cite{CMP19} implies that
there is a constant $\gamma>0$ such that every hypercuboid in
\lhc{4}n{\gamma n} is extendible provided $n$ is a power of 2. This
was soon generalised to all even $n$ by Casselgren and Pham
\cite{CP20}.

\begin{theorem}\label{t:4dim}
  There is a constant $\gamma>0$ such that $\next4n\ge\gamma n$ for
  all even $n$.
\end{theorem}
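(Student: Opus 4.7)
The plan is the direct analogue of the reduction from Latin cuboid extendibility to H\"aggkvist's problem on $(m,m,m)$-arrays that is described in Section~\ref{s:cubes}, lifted one dimension higher. Given any $H\in\lhc4nk$, form the $(n^3,n-k)$-array $U_H$ of unused symbols in lines of length $k$ parallel to the final axis. As noted just before \tref{t:nonlayerable}, $H$ is extendible if and only if $U_H$ contains a layer, that is, one can select a representative from each $U_H(i_1,i_2,i_3)$ so as to produce a member of $\lhc3n1$ (a Latin cube of order~$n$).

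Taking complements gives an $n\times n\times n$ array $M_H$ defined by $M_H(i_1,i_2,i_3)=[n]\setminus U_H(i_1,i_2,i_3)$; each cell of $M_H$ has size exactly $k$, and each symbol of $[n]$ appears in exactly $k$ cells along any axis-parallel line of $M_H$. This is the natural 3-dimensional analogue of H\"aggkvist's $(k,k,k)$-array. A layer of $U_H$ is precisely a Latin cube that avoids $M_H$ in the sense that the symbol it places in each cell does not lie in the corresponding cell of $M_H$.

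The next step is to invoke the 3-dimensional avoidability theorem of Casselgren and Pham \cite{CP20}, which establishes the 3-dimensional analogue of \cref{cj:Hag} for every even $n$: there exists an absolute constant $\gamma>0$ such that every $n\times n\times n$ array of subsets of $[n]$ of size at most $\gamma n$, in which each symbol appears at most $\gamma n$ times along any axis-parallel line, is avoided by some Latin cube of order $n$. Applied to $M_H$ with $k<\gamma n$, this produces the required avoiding Latin cube, hence a layer of $U_H$, and therefore an extension of $H$ to an element of $\lhc4n{k+1}$.

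The main obstacle is contained entirely in the cited 3-dimensional avoidability result; once it is in hand, \tref{t:4dim} follows by the routine reduction above. The parity hypothesis on $n$ is inherited from \cite{CP20}: the earlier theorem of Casselgren, Markstr\"om and Pham \cite{CMP19} already suffices when $n$ is a power of $2$, and removing the hypothesis entirely would appear to require a 3-dimensional version of the odd-$n$ arguments of Andr\'en, Casselgren and \"Ohman \cite{ACO13}.
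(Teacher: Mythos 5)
Your proposal is correct and matches the paper's treatment: \tref{t:4dim} is not proved in the paper but is simply attributed to Casselgren and Pham \cite{CP20}, whose avoidability theorem for Latin cubes of even order is precisely the result you invoke, and your reduction from extendibility of a member of \lhc4nk\ to avoiding the complementary array $M_H$ by a Latin cube is the intended (and correct) derivation. The only cosmetic point is that the paper reserves the name $(m,m,m)$-array for the two-dimensional case, so your ``3-dimensional analogue'' should be flagged as such rather than given that name outright.
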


Casselgren and Pham conjectured the corresponding result holds also
for odd $n$. There is nothing shown for higher dimensions.  In light
of \tref{t:monotone}, we note that \tref{t:4dim} implies the
corresponding result for three dimensions, first shown in \cite{And10}.
We thus have a fully refereed proof of \cref{cj:Hag}.

\section{Open questions}

As the literature review in the Section~\ref{s:cubes} demonstrates, there
is still much we do not know about extendibility and completability of
Latin hypercuboids. By the result of Potapov mentioned above, we may as well
assume henceforth that $n\ge5$.

\subsection{Extendibility}

\begin{qstn}\label{q:cd}
  For each dimension $d\ge3$, does there exist a constant $c_d>0$ such that
  $\next{d}n=c_dn+o(n)$?
\end{qstn}

Even in three dimensions the true growth rate is unclear. From the proof of
\cref{cj:Hag} we know that there is a linear lower bound on $\next3n$
(with an unspecified constant), and we know that, if $c_3$ exists then
$c_3\le1/2$ from \tref{t:under-half}. 
As shown in \tref{t:stateofartext},
we have a much stronger result for $n\equiv3\bmod4$ than we have for other $n$. It is natural to try to do at least as well  when $n\not\equiv3\bmod4$.

\begin{qstn}\label{q:halfn}
  Is $\next3{n}\le n/2$ for all large $n$?
\end{qstn}

In four dimensions we have a linear lower bound on $\next4n$ from
\tref{t:4dim}, but only for even $n$. It must surely be true, as
Casselgren and Pham \cite{CP20} conjecture, that ``even'' can be
dropped.

\begin{qstn}\label{q:linlow}
  Is there a constant $\gamma>0$ such that $\next4n\ge\gamma n$ for
  all $n\ge5$?
\end{qstn}

For dimensions higher than 4, the situation is wide open. It may be that
\qref{q:cd} is inaccessible in the near future. A more modest goal
is to settle this:

\begin{qstn}
  For $d>4$ does there exist a $\gamma_d>0$ such that $\next{d}{n}\ge\gamma_dn$ for all $n\ge5$?
\end{qstn}

We are unsure what we expect the answer to this question will turn out
to be.  It is not out of the question that $\next{d}{n}=o(n)$ for
large $d$.

\subsection{Completability}

If the situation for extendibility is quite unclear, the picture for
completability is positively opaque. The main thing we know is that $\ncom dn\le\lceil n/2\rceil$ by combining \tref{t:monotone} with \tref{t:stateofartcom}.
%\begin{qstn}  %% Essentially known!
%  Is $\ncom3{n}\le n/2$ for all large $n$?
%\end{qstn}
%Of course, an affirmative answer to \qref{q:halfn} would also settlethis question. 
It is worthwhile to try to improve this bound, even in 3 dimensions, and interesting to consider what happens as the dimension increases.

\begin{qstn}
    Is $\ncom dn=o(1)n$ as $d\rightarrow\infty$, with $n$ fixed?
\end{qstn}

Perhaps even more challenging is the task of proving lower
bounds on $\ncom{d}{n}$. To our knowledge, the only such known bound is the
rather trivial observation that $\ncom{d}{n}\ge2$ for all $n,d$ because
any Latin hypercuboid with a single layer can be developed cyclically to
complete a Latin hypercube. We thus could ask the analogous question
to \qref{q:linlow}, but perhaps it is more appropriate to ask for any kind
of increasing lower bound:

\begin{qstn}
  For fixed $d$, does $\ncom{d}{n}\rightarrow\infty$ as $n\rightarrow\infty$?
\end{qstn}

Another fundamental and interesting line of enquiry is about the
relationship between $\next{d}{n}$ and $\ncom{d}{n}$. We know the
latter is bounded above by the former, but when is this inequality
strict?

\begin{qstn}
  For which $d,n$ is $\next{d}{n}>\ncom{d}{n}$?
\end{qstn}

It is quite possible that the gap between these functions grows large,
but this is not known.

\begin{qstn}
  For each $d$, is $\next{d}{n}-\ncom{d}{n}$ bounded as $n\rightarrow\infty$? Is $\next{d}{n}/\ncom{d}{n}$ bounded?
\end{qstn}

Another line of enquiry is to consider an intermediate zone between
extendibility and completability. Can many extra layers be added?

\begin{qstn}
  For fixed $d$ does there exist a small $\eps>0$ such that every
  hypercuboid in \lhc{d}{n}{\eps n+o(n)} can be extended to a hypercuboid
  in \lhc{d}{n}{(1-\eps)n+o(n)}?
\end{qstn}

Obviously such an extension cannot be done carelessly layer-by-layer,
without risking hitting one of the obstacles embodied in
\tref{t:under-half}.

\subsection{Random Latin hypercuboids}

Another source of open problems is to consider properties of random
Latin hypercuboids. Here we are not aware of any results, so the field is
ripe for exploration.

\begin{qstn}\label{q:randext}
  For which constants $d,c$ is it true that almost all hypercuboids in
  \lhc{d}{n}{cn+o(n)} are extendible?
\end{qstn}

\begin{qstn}
  For which constants $d,c$ is it true that almost all hypercuboids in
  \lhc{d}{n}{cn+o(n)} are completable? 
\end{qstn}

The constructions used to prove results such as \tref{t:under-half} involve
carefully structured sets $U_H(i_1,\dots,i_{d-1})$.
One approach to making progress on \qref{q:randext} would be to show that
if the sets $U_H(i_1,\dots,i_{d-1})$ satisfy some pseudorandomness property then
the hypercuboid $H$ must be extendible.
We say that $H\in\lhc{d}{n}{k}$ is \emph{$\delta$-regular} if 
$$\bigg|\frac{n}{(n-k)^2}\,\big|U_H(i_1,\dots,i_{d-1})\cap U_H(j_1,\dots,j_{d-1})\big|-1\bigg|\le\delta$$
  whenever the vectors
  $(i_1,\dots,i_{d-1})$ and $(j_1,\dots,j_{d-1})$ agree in all but one coordinate.
  
\begin{qstn}
  For which $d,n,k,\delta$ does it follow that every $\delta$-regular
  $H\in\lhc dnk$ must be extendible?
\end{qstn}

\subsection{Arrays of sets}

All of the questions discussed above have their analogues for $(n^d,k)$-arrays. Some of those questions may be easier to approach without the need to consider whether an array is realisable. For example, we may ask:

\begin{qstn} For which constants $d,c$ do almost all $(n^d,cn+o(n))$-arrays contain a layer?
\end{qstn}

\begin{qstn} For which constants $d,c$ are almost all $(n^d,cn+o(n))$-arrays  layerable?
\end{qstn}

It is also interesting to consider whether realisability has a material affect on outcomes.

\begin{qstn}
   For $n\rightarrow\infty$ and fixed $d$, is the smallest $k$ for which there exists an $(n^{d-1},n-k)$-array that contains no layer asymptotically equal to $\next{d}{n}$? 
\end{qstn}

\begin{qstn}
   For $n\rightarrow\infty$ and fixed $d$, is the smallest $k$ for which there exists a nonlayerable $(n^{d-1},n-k)$-array asymptotically equal to $\ncom{d}{n}$? 
\end{qstn}

\begin{acknowledgement}
This work was commenced during the Extremal Problems in Graphs, Designs, and Geometries workshop at the Australian mathematical research institute MATRIX. The authors are grateful to MATRIX and the organisers of this workshop. 
\end{acknowledgement}

\let\oldthebibliography=\thebibliography
  \let\endoldthebibliography=\endthebibliography
  \renewenvironment{thebibliography}[1]{%
    \begin{oldthebibliography}{#1}%
      \setlength{\parskip}{0.4ex plus 0.1ex minus 0.1ex}%
      \setlength{\itemsep}{0.4ex plus 0.1ex minus 0.1ex}%
  }%
  {%
    \end{oldthebibliography}%
  }

\end{document}